\def\@cite#1#2{{\m@th\upshape\bfseries%
[{#1\if@tempswa{\m@th\upshape\mdseries, #2}\fi}]}}
\theoremstyle{plain}
\newtheorem{thm}{Theorem}[section]
\newtheorem{prop}[thm]{Proposition}
\newtheorem{lem}[thm]{Lemma}
\theoremstyle{definition}
\newtheorem{rem}[thm]{Remark}
\newcommand{\bC}{{\mathds{C}}}
\newcommand{\bD}{{\mathds{D}}}
\newcommand{\bN}{{\mathds{N}}}
\newcommand{\bT}{{\mathds{T}}}
  \newcommand{\A}{{\mathcal{A}}}
\renewcommand{\H}{{\mathcal{H}}}
  \newcommand{\K}{{\mathcal{K}}}
\newcommand{\fM}{{\mathfrak{M}}}
\newcommand{\rA}{{\mathrm{A}}}
\newcommand{\ep}{{\varepsilon}}
\newcommand{\qand}{\quad\text{and}\quad}
\newcommand{\qfor}{\quad\text{for}\ }
\newcommand{\AND}{\text{ and }}
\newcommand{\Alg}{\operatorname{Alg}}
\newcommand{\spn}{\operatorname{span}}
\newcommand{\re}{\operatorname{Re}}
\newcommand{\ip}[1]{\langle #1 \rangle}
\newcommand{\ol}{\overline}
\newcommand{\ltwo}{{\ell^2}}
\newenvironment{sbmatrix}{\left[\begin{smallmatrix}}{\end{smallmatrix}\right]}
\begin{document}

\title{A $3\times3$ dilation counterexample}

\author[M.D. Choi]{Man Duen Choi}
\address{Mathematics Department\\University of Toronto\\ Toronto, ON\;  \newline
M5S 2E4\\CANADA}
\email{choi@math.toronto.edu}

\author[K.R. Davidson]{Kenneth R. Davidson}
\address{Pure Math. Department\\University of Waterloo\\ Waterloo, ON\;
N2L 3G1\\CANADA}
\email{krdavids@uwaterloo.ca}

\begin{abstract}
We define four $3\times3$ commuting contractions which
do not dilate to commuting isometries. However they do satisfy
the scalar von Neumann inequality.
These matrices are all nilpotent of order 2.
We also show that any three $3\times3$ commuting contractions which
are scalar plus nilpotent of order 2 do dilate to commuting isometries.
\end{abstract}

\subjclass[2010]{47A20, 47A30, 15A60}
\keywords{dilation, von Neumann inequality}
\thanks{Partially supported by grants from NSERC}

\dedicatory{Dedicated to the memory of William B. Arveson}

\date{}
\maketitle

\section{Introduction}\label{S:intro}

Seminal work of Sz.Nagy \cite{SzN} showed that every contraction $A$ has a coextension
to an isometry of the form
\[ S = \begin{bmatrix}A&0\\ *&*\end{bmatrix} .\]
This provides a simple proof of von Neumann's inequality:
\[ \|p(A)\| \le \|p\|_\infty := \sup_{|z|\le1} |p(z)| \]
for all polynomials. Indeed, this remains valid for matrices of polynomials
\[
 \big\| \big[ p_{ij}(A) \big] \big\| \le 
 \big\| \big[ p_{ij} \big] \big\| := 
 \sup_{|z|\le1} \big\| \big[ p_{ij} (z)\big] \big\| .
\]
A decade later, Ando \cite{Ando} showed that two commuting contractions have 
simultaneous coextensions to a common Hilbert space which are
commuting isometries. 
This yields the 2-variable matrix von Neumann inequality.

However Varopoulos \cite{Var} showed that there exist three commuting contractions
which do not satisfy von Neumann's inequality, and therefore do not have
a simultaneous coextension to three commuting isometries.
In the appendix, he and Kaijser provide an example with $5\times5$ matrices.
A related example of Parrott \cite{Par} provides three commuting contractions
which do satisfy the scalar von Neumann inequality but fail the matrix version.
Thus they also do not dilate.
A dilation theorem of Arveson \cite{Arv1} shows that there is a simultaneous coextension
to commuting isometries if and only if the matrix von Neumann inequality holds.
A nice treatment of this material is contained in Paulsen \cite{PaulCB},
and the earlier material is contained in the classic text \cite{SF}.

Holbrook \cite{Holb1} found three $4\times4$ matrices which are commuting 
contractions yet do not coextend to commuting isometries. 
He also showed \cite{Holb2} that for $2\times2$ matrices, arbitrary commuting
families of commuting contractions do have commuting isometric coextensions.
See also \cite{Drury}.
Holbrook asks what the situation is for $3\times3$ matrices.

Sometimes these results are stated instead for unitary (power) dilations
of the form
\[ U_i  = \begin{bmatrix} *&0&0\\ *&A_i&0\\ *&*&*\end{bmatrix} \]
on a Hilbert space $\K = \K_- \oplus \H \oplus \K_+$.
If these unitaries commute, then the
restriction of $U_i$ to $\H \oplus \K_+$, namely the lower $2\times2$ corner,
yields commuting isometric  coextensions $S_i$; (and the compression to $\K_- \oplus \H$ 
yields commuting coisometric extensions of the $A_i$).
 
Conversely, suppose that commuting contractions $A_i$ have coextensions to commuting
isometries $S_i$.
An old result of Ito and Brehmer \cite[Proposition I.6.2]{SF} shows that 
the $S_i$ dilate to commuting unitaries $U_i$ so that
\[
 P_\H U_1^{k_1} \cdots U_s^{k_s}|_\H = A_1^{k_1} \cdots A_s^{k_s} 
 \qfor k_i \ge 0. 
\]
It follows that they simultaneously have a triangular form as given above.
Therefore these two formulations are equivalent.

In this note, we provide an example of four commuting contractions in the  
$3\times3$ matrices $\fM_3$
which cannot be coextended to commuting isometries. The question of whether
there are three commuting $3\times3$ contractive matrices
which cannot be coextended to commuting isometries remains open.

Our examples are nilpotents of order 2.
We show that any finite family of commuting contractions of this form
always satisfy the scalar von Neumann's inequality.
It would be of interest to know whether the scalar von Neumann inequality
holds for commuting $3\times3$ contractions.
For our counterexample, we exhibit a specific matrix polynomial which shows that 
the matrix valued von Neumann inequality fails.

We also show that any three commuting $3\times 3$ contractions which are 
of the form scalar plus nilpotent of order 2 always do have commuting
isometric coextensions.

When the second author discovered these examples, he found out that 
the first author and Y. Zhong had a similar example 
from many years ago which was never published. 
Zhong has left academia, and could not be contacted---but he shares in the
credit for this work.

\section{The Example}\label{S:ex}

Let $\H=\bC^3$ have an orthonormal basis $f,e_1,e_2$. 
For $i = 1,2$, pick $\theta_i \in (0,\pi/2)$ and set 
$c_i = \cos\theta_i$ and $s_i = \sin\theta_i$.
Define 
\begin{alignat*}{3}
A_1 &= e_1f^* &\qquad\qquad A_2 &= e_2f^*\\ 
&= \begin{bmatrix}0&0&0\\1&0&0\\0&0&0\end{bmatrix}
&&= \begin{bmatrix}0&0&0\\0&0&0\\1&0&0\end{bmatrix} 
\shortintertext{and}
A_3 &= (c_1e_1+s_1e_2) f^* &\qquad A_4 &= (c_2e_1 +is_2e_2) f^* \\
&= \begin{bmatrix}0&0&0\\c_1&0&0\\s_1&0&0\end{bmatrix}
&&= \begin{bmatrix}0&0&0\\c_2&0&0\\is_2&0&0\end{bmatrix}.
\end{alignat*}
Observe that $A_iA_j = 0$ for all $1 \le i,j \le 4$.

\begin{thm} \label{T:no_dilation}
The matrices $A_1$, $A_2$, $A_3$ and $A_4$ do not have simultaneous coextensions
to commuting isometries.
\end{thm}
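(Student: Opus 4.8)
The plan is to assume, toward a contradiction, that there exist commuting isometries $S_1,\dots,S_4$ on a Hilbert space $\K\supseteq\H$ that coextend the $A_i$, so that each $S_i=\begin{bmatrix}A_i&0\\ *&*\end{bmatrix}$ with respect to $\K=\H\oplus(\K\ominus\H)$. The whole argument takes place on the three-dimensional subspace $\H$: I would extract from commutativity and the isometry property enough rigid constraints on how the $S_i$ act on $f,e_1,e_2$ to reach an impossibility.

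First I would pin down $S_i$ on $f$. Writing $v_i=A_if$ (so $v_1=e_1$, $v_2=e_2$, $v_3=c_1e_1+s_1e_2$, $v_4=c_2e_1+is_2e_2$), each $v_i$ is a unit vector. Since $S_i$ is isometric and $\|f\|=1$, the lower-triangular form forces the $(\K\ominus\H)$-component of $S_if$ to vanish, so $S_if=v_i\in\H$. Next, setting $p_i:=S_ie_1$ and $q_i:=S_ie_2$, the triangular form gives $p_i,q_i\in\K\ominus\H$ because $A_ie_1=A_ie_2=0$, while the isometry property makes $\{p_i,q_i\}$ orthonormal for each $i$.

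The key mechanism is commutativity evaluated at $f$: from $S_jf=v_j$ one gets $S_iv_j=S_jv_i$ for all $i,j$, and expanding each $v_j$ in the basis $e_1,e_2$ turns these into linear identities among the $p_i,q_i$. The pair $(i,j)=(1,2)$ gives $q_1=p_2$ (so in particular $p_1\perp p_2$, since $p_1\perp q_1$), and $(1,3),(2,3)$ give $p_3=c_1p_1+s_1q_1$ and $q_3=c_1p_2+s_1q_2$. Imposing $p_3\perp q_3$ and using $q_1=p_2$ forces $\ip{p_1,q_2}=-1$, whence $q_2=-p_1$ as both are unit vectors. Finally I would feed in the relations from $(1,4)$ and $(2,4)$, namely $p_4=c_2p_1+is_2q_1$ and $q_4=c_2p_2+is_2q_2$, substitute $q_1=p_2$ and $q_2=-p_1$, and compute $\ip{p_4,q_4}$ directly from the orthonormality of $\{p_1,p_2\}$. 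The cross terms fail to cancel because of the imaginary coefficient, giving $\ip{p_4,q_4}=2ic_2s_2\neq0$, which contradicts $\ip{p_4,q_4}=\ip{e_1,e_2}=0$.

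I expect the only real subtlety to be organizational: choosing to evaluate commutativity at the cyclic vector $f$ (where each $S_i$ is pinned down to $v_i$) and carefully tracking the orthonormality constraints $p_i\perp q_i$, which are exactly what the isometry hypothesis supplies. The conceptual heart is that the three ``real'' vectors $v_1,v_2,v_3$ rigidly determine $p_1,q_1,p_2,q_2$ up to the identifications above, after which the single complex entry $is_2$ in $v_4$ obstructs the orthogonality $S_4e_1\perp S_4e_2$ required for $S_4$ to be an isometry; note that replacing $is_2$ by a real entry would make the offending inner product vanish, so the complexity of $v_4$ is essential to the counterexample.
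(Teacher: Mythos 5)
Your proof is correct, and each asserted identity checks out: $S_if=A_if$ is forced because $\|A_if\|=1$ and the coextension is lower triangular; commutativity at $f$ gives $S_iv_j=S_jv_i$; the pair $(1,2)$ yields $q_1=p_2$; orthogonality $p_3\perp q_3$ reduces to $c_1s_1(1+\ip{p_1,q_2})=0$, so $q_2=-p_1$ by equality in Cauchy--Schwarz; and then $\ip{p_4,q_4}=\pm2ic_2s_2\ne0$ (sign depending on inner product convention), contradicting $\ip{S_4e_1,S_4e_2}=\ip{e_1,e_2}=0$. The divisions are legitimate since $c_is_i\ne0$ for $\theta_i\in(0,\pi/2)$. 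Your route shares the paper's two pillars --- the rigidity $S_if=A_if$ and commutation evaluated at $f$ --- but executes them locally rather than globally, and this is a genuine difference in the argument's architecture. The paper first replaces $\K$ by the smallest invariant subspace containing $\H$, uses that cyclicity to promote the vector identities to operator identities $S_3=c_1S_1+s_1S_2$ and $S_4=c_2S_1+is_2S_2$ on all of $\K$, and then extracts $S_2^*S_1+S_1^*S_2=0$ and $S_2^*S_1-S_1^*S_2=0$ from $S_3^*S_3=I=S_4^*S_4$, so that $S_2^*S_1=0$; the terminal contradiction is that isometries with orthogonal ranges cannot commute. You never need the minimality reduction: your relations are precisely the paper's operator identities evaluated at $e_1$ and $e_2$, and your contradiction lands instead on the failure of $S_4$ to be isometric on $\spn\{e_1,e_2\}$. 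What the paper's globalization buys is structural transparency ($S_2^*S_1=0$ names the obstruction explicitly) and reusability --- the same operator identities power the paper's subsequent remark that even commuting isometries not of coextension form are impossible. What your version buys is a completely finitary argument: a Gram-matrix computation with eight vectors, checkable by hand, with no appeal to cyclic subspaces or operator-level identities. Both proofs isolate the same conceptual point, also noted in the paper: the single complex entry $is_2$ in $v_4$ is what obstructs the dilation, so the example cannot be realized with real matrices.
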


\begin{proof}
Suppose that the $A_i$ coextend to commuting isometries 
\[ S_i = \begin{bmatrix}A_i & 0 \\ D_i & V_i\end{bmatrix}  \qfor i = 1,2,3,4\]
on a Hilbert space $\K$. 
We may assume that $\K$ is the smallest invariant subspace for the $S_i$
containing $\H$.  That is, if we write $S^k = S_1^{k_1} S_2^{k_2}S_3^{k_3}S_4^{k_4}$
for $k = (k_1,k_2,k_3,k_4) \in \bN^4$, then
\[ \K = \bigvee_{k\in\bN^4} S^k \H. \]

Observe that since $\|A_i f\|=1$, we have
\[ S_i f = A_i f \qfor i = 1,2,3,4. \]
Hence
\[ (c_1S_1 + s_1S_2 - S_3) f = 0 .\]
Therefore
\[ 0 = S^k (c_1S_1 + s_1S_2 - S_3) f = (c_1S_1 + s_1S_2 - S_3) S^k f .\]
Since $S_i f = e_i$ for $i=1,2$, 
\[ \ker (c_1S_1+s_1S_2-  S_3) \supset \bigvee_{k\in\bN^4} S^k \spn\{f,S_1f,S_2f\} = \K .\]
So 
\[ S_3 = c_1S_1 + s_1S_2 .\]
Similarly, 
\[ S_4 = c_2S_1 +is_2S_2 .\]

Since $S_3$ is an isometry, 
\begin{align*}
 I &= S_3^* S_3 \\
 &= c_1^2 S_1^* S_1 + s_1^2 S_2^* S_2 + c_1s_1 (S_2^*S_1 + S_1^*S_2) \\
 &= I +  c_1s_1 (S_2^*S_1 + S_1^*S_2) .
\end{align*}
It follows that 
\[ S_2^*S_1 + S_1^*S_2 = 0 .\]
Likewise, using the fact that $S_4$ is an isometry,
\[ 0 = (iS_2)^*S_1 + S_1^*(iS_2) = -i(S_2^*S_1 - S_1^*S_2). \]
Therefore 
\[ S_2^*S_1 = 0 .\]
This implies that $S_1$ and $S_2$ have pairwise orthogonal ranges,
and therefore do not commute.
This contradiction establishes the result.
\end{proof}

\begin{rem}
It is possible to coextend any three of these operators to commuting isometries.
This will follow from Theorem~\ref{T:nilpotent}.
Indeed, the construction given there will simultaneously coextend $A_1$, $A_2$
and all matrices $A_j = (c_je_1+s_je_2)f^*$ such that $c_j\bar s_j$ have a common
argument. So this example could not be given using real matrices.
\end{rem}

\begin{rem}
It is not even possible to find commuting isometries of the form
\[ S_i = \begin{bmatrix} A_i & B_i\\ C_i&D_i \end{bmatrix} .\]
As in our proof above, we may suppose that $\H$ is a cyclic subspace.
Exactly the same argument shows that 
\[
 S_3 = c_1S_1 + s_1S_2 \qand
 S_4 = c_2S_1 +is_2S_2 .
\]
Hence  $S_2^*S_1 = 0$, which contradicts commutativity.
\end{rem}

\section{Von Neumann's Inequality} \label{S:vN}

It is also of interest to find commuting contractions in $\fM_3$ that fail the
scalar von Neumann inequality. Apparently computer searches for such
examples have been unsuccessful. We show that our example does satisfy
the scalar von Neumann inequality. So it does not settle this question.
We provide an explicit matrix polynomial which fails the matrix von Neumann inequality.

\begin{lem} \label{L:2nilpotent}
If $A_1,\dots,A_n \in \fM_3$ are commuting nilpotents of order 2, then
there is a vector $f$ and vectors $v_i \in (\bC f)^\perp$ so that either
$A_i = v_i f^*$ for $1 \le i \le n$ or $A_i = fv_i^*$ for $1 \le i \le n$.
\end{lem}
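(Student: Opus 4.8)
The plan is to reduce each $A_i$ to rank-one data, use commutativity to force a single orthogonality relation between the ranges and the co-ranges, and then close with a dimension count that is available only in $\fM_3$.

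First I would record the shape of a single order-$2$ nilpotent. If $A\in\fM_3$ and $A^2=0$ then $\operatorname{ran}A\subseteq\ker A$, so $\operatorname{rank}A\le\dim\ker A$; combined with $\operatorname{rank}A+\dim\ker A=3$ this gives $\operatorname{rank}A\le1$. Discarding any $A_i=0$ (for which $v_i=0$ works), I may therefore write each $A_i=u_iw_i^*$ with $u_i,w_i\ne0$. As $A_i^2=(w_i^*u_i)\,u_iw_i^*$, the condition $A_i^2=0$ is exactly $w_i^*u_i=0$, i.e.\ $u_i\perp w_i$.

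The heart of the argument is to promote this to $w_i^*u_j=0$ for \emph{every} pair $i,j$. Put $\alpha_{ij}:=w_i^*u_j$, so that $A_iA_j=\alpha_{ij}\,u_iw_j^*$ and commutativity reads
\[
  \alpha_{ij}\,u_iw_j^*=\alpha_{ji}\,u_jw_i^*.
\]
Suppose $\alpha_{ij}\ne0$. The left side is then a nonzero multiple of the rank-one operator $u_iw_j^*$, hence nonzero, so the right side is nonzero too; both sides are rank one, with ranges $\bC u_i$ and $\bC u_j$, and equality forces $u_j\in\bC u_i$. But then $\alpha_{ij}=w_i^*u_j$ is a scalar multiple of $w_i^*u_i=0$, contradicting $\alpha_{ij}\ne0$. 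Hence $\alpha_{ij}=0$ for all $i,j$, which says precisely that $\spn\{u_i\}\perp\spn\{w_i\}$. I expect this to be the main obstacle, as it is the one place where commutativity and the order-$2$ condition must be combined.

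Finally I would use dimension three. Orthogonality gives $\dim\spn\{u_i\}+\dim\spn\{w_i\}\le3$, and (assuming not every $A_i$ is zero) each summand is at least $1$, so one of them equals $1$. If $\dim\spn\{w_i\}=1$, all $w_i$ are multiples of a single unit vector $f$, whence $A_i=v_if^*$ with $v_i\in\bC u_i$; since $u_i\perp w_i\in\bC f$ we get $v_i\in(\bC f)^\perp$. If instead $\dim\spn\{u_i\}=1$, the symmetric argument produces a common $f$ with $A_i=fv_i^*$ and $v_i\in(\bC f)^\perp$. This is exactly the stated dichotomy.
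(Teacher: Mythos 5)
Your proof is correct and takes essentially the same route as the paper: factor each nonzero $A_i$ as a rank-one operator $u_iw_i^*$ with $u_i\perp w_i$, use commutativity of each pair to force $\spn\{u_1,\dots,u_n\}\perp\spn\{w_1,\dots,w_n\}$, and conclude by a dimension count in $\bC^3$ that one of the two spans is a line, passing to adjoints in the second case. The only cosmetic difference is that you dispatch the degenerate case (one $A_j$ a scalar multiple of another) by deriving a contradiction from $\alpha_{ij}\ne0$, whereas the paper splits into cases and observes the orthogonality identity holds in both.
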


\begin{proof}
Observe that a nilpotent $A \in \fM_3$ of order 2 must have rank one.
Thus it can be expressed as $A = vf^*$ where 
$f$ is a unit vector. And since 
\[ 0 = A^2 = \ip{v,f}A ,\]
we have that $\ip{v,f}=0$.
If two such non-zero operators $A_i = v_if_i^*$ commute, then
\[
 \ip{v_2,f_1} v_1 f_2^* = (v_1 f_1^*)(v_2 f_2^*) 
 = (v_2 f_2^*) (v_1 f_1^*) = \ip{v_1,f_2} v_2 f_1^* .
\]
So either 
\[ \ip{v_2,f_1} = \ip{v_1,f_2} = 0 \]
or $v_2f_2^*$ is a multiple of $v_1f_1^*$,
in which case this identity remains true.
So 
\[ \spn\{ v_1,v_2 \} \perp \spn\{f_1,f_2\} .\]

Furthermore, if $n$ non-zero operators $v_if_i^*$ commute, 
then the pairwise relations yield
\[ \spn\{v_1,\dots,v_n\} \perp \spn\{f_1,\dots,f_n\} .\]
Therefore one of these subspaces is 1-dimensional.
By taking adjoints if necessary,
we may suppose that $\spn\{f_1,\dots,f_n\}$ is one dimensional.
So after a scalar change, we have $A_i = v_i f^*$ for $i = 1,\dots,n$;
and each $v_i$ belongs to $(\bC f)^\perp$.
\end{proof}

\begin{prop} \label{P:vN}
Any finite number of commuting contractions $A_1$, \dots, $A_n$
in $\fM_3$ of the form scalar plus order 2 nilpotent  
satisfy the scalar von Neumann inequality.
\end{prop}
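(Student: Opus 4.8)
The plan is to first normalize the family using Lemma~\ref{L:2nilpotent}. Writing $A_i = \lambda_i I + N_i$ with $N_i$ the order~2 nilpotent part, commutativity of the $A_i$ forces the $N_i$ to be commuting order~2 nilpotents; so by Lemma~\ref{L:2nilpotent}, after a unitary conjugation and replacing every $A_i$ by its adjoint if necessary, there is a unit vector $f$ and vectors $v_i \in (\bC f)^\perp$ with $A_i = \lambda_i I + v_i f^*$. Passing to adjoints is harmless for the scalar von Neumann inequality, since $p(A_1,\dots,A_n)^* = \tilde p(A_1^*,\dots,A_n^*)$ with $\tilde p(z) = \ol{p(\bar z)}$ and $\|\tilde p\|_\infty = \|p\|_\infty$.

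Next I would record the algebraic consequence of $v_i \perp f$: since $f^* v_j = \ip{v_j,f} = 0$, the nilpotents satisfy $N_i N_j = 0$ for all $i,j$. Expanding a monomial $A^\alpha = \prod_i (\lambda_i I + v_i f^*)^{\alpha_i}$ and discarding every term containing two or more factors $N_i$ (all of which vanish) leaves only the first-order part, and by linearity this gives, for any polynomial $p$,
\[
 p(A_1,\dots,A_n) = p(\lambda) I + \sum_{i=1}^n \partial_i p(\lambda)\, v_i f^* ,
\]
where $\lambda = (\lambda_1,\dots,\lambda_n)$ and $\partial_i = \partial/\partial z_i$. Setting $w = \sum_i \partial_i p(\lambda) v_i \in (\bC f)^\perp$, the task becomes bounding the norm of the operator $p(\lambda) I + w f^*$.

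The third step is a norm formula for operators of the shape $\mu I + w f^*$ with $w \perp f$ and $\|f\|=1$. Such an operator is unitarily equivalent to $\mu I \oplus \begin{bmatrix}\mu&0\\ \|w\|&\mu\end{bmatrix}$, and computing $t^2 I - (\mu I + w f^*)^*(\mu I + w f^*)$ shows that
\[
 \|\mu I + w f^*\| \le t \iff |\mu|^2 + t\,\|w\| \le t^2 \qquad (t>0).
\]
In particular the hypothesis that each $A_i$ is a contraction becomes the clean constraint $|\lambda_i|^2 + \|v_i\| \le 1$, that is $\|v_i\| \le 1 - |\lambda_i|^2$. Applying the criterion with $t = \|p\|_\infty =: M$ (we may assume $M>0$) reduces the proposition to the single scalar inequality $|p(\lambda)|^2 + M\,\|w\| \le M^2$, which I label $(\star)$.

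The heart of the argument---and the step I expect to be the main obstacle---is proving $(\star)$, where the contraction constraints must be converted into an estimate on $\|w\|$; my plan is to use the Schwarz--Pick lemma. Choose the unit vector $u = w/\|w\|$ realizing $\|w\| = \big|\sum_i \partial_i p(\lambda) \ip{v_i,u}\big|$ and set $a_i = \ip{v_i,u}$, so that $|a_i| \le \|v_i\| \le 1 - |\lambda_i|^2$. For each $i$ with $v_i \ne 0$ we have $\lambda_i \in \bD$, so let $\psi_i$ be the disk automorphism with $\psi_i(0) = \lambda_i$ and $\psi_i'(0) = 1 - |\lambda_i|^2$, and put $\beta_i = a_i/(1-|\lambda_i|^2)$ (and $\beta_i = 0$ when $v_i = 0$); then $|\beta_i| \le 1$. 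The one-variable function $h(\zeta) = p\big(\psi_1(\zeta\beta_1),\dots,\psi_n(\zeta\beta_n)\big)$ maps $\bD$ into $M\,\ol{\bD}$, with $h(0) = p(\lambda)$ and $h'(0) = \sum_i \partial_i p(\lambda)(1-|\lambda_i|^2)\beta_i = \sum_i \partial_i p(\lambda) a_i$, so $|h'(0)| = \|w\|$. Schwarz--Pick applied to $h/M$ at the origin yields $\|w\| = |h'(0)| \le (M^2 - |p(\lambda)|^2)/M$, which is exactly $(\star)$. The delicate point to get right is that the contraction bound $\|v_i\| \le 1 - |\lambda_i|^2$ matches the automorphism derivative $\psi_i'(0) = 1 - |\lambda_i|^2$ precisely, so that $\|\beta\|_\infty \le 1$ and the Schwarz--Pick derivative bound closes the estimate with no slack to spare.
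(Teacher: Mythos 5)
Your proof is correct, but it takes a genuinely different route from the paper's. The paper argues in two stages: for pure nilpotents $A_i=v_if^*$ it observes $A_iA_j=0$, so only the affine part of $p$ survives, and it collapses the whole linear term into a single contraction $B=\big(\sum_i|a_i|\big)^{-1}\sum_i a_iA_i$, finishing with the classical one-variable von Neumann inequality applied to $B$; for the general case $A_i=\lambda_iI+N_i$ it applies the M\"obius maps $b_i(z)=(z-\lambda_i)/(1-\ol{\lambda_i}z)$ to reduce to the nilpotent case, at the cost of an $\ep$-perturbation when $|\lambda_i|=1$ and of evaluating the polydisc-algebra function $q=p\circ(b_1^{-1},\dots,b_n^{-1})$ --- no longer a polynomial --- on the nilpotent tuple (which strictly speaking needs an approximation or first-order Taylor remark). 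You instead compute $p(A_1,\dots,A_n)=p(\lambda)I+\sum_i\partial_ip(\lambda)\,v_if^*$ exactly, establish the sharp criterion $\|\mu I+wf^*\|\le t \iff |\mu|^2+t\|w\|\le t^2$, and close the resulting scalar inequality $(\star)$ with Schwarz--Pick along the analytic disk $\zeta\mapsto(\psi_i(\zeta\beta_i))_i$; I checked the norm criterion (the determinant of $t^2I-J^*J$ for $J=\begin{sbmatrix}\mu&0\\ \|w\|&\mu\end{sbmatrix}$ is $(t^2-|\mu|^2)^2-\|w\|^2t^2$), the derivative bookkeeping $\psi_i'(0)\beta_i=a_i$, and the matching $h'(0)=\ip{w,u}=\|w\|$, and they all work. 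What your route buys: it is self-contained (Schwarz--Pick is exactly the scalar shadow of the one-variable von Neumann inequality for the block $\begin{sbmatrix}\mu&0\\ \|w\|&\mu\end{sbmatrix}$, so you have isolated precisely the instance needed, with no dilation theory input), it handles $|\lambda_i|=1$ directly since the contraction criterion forces $v_i=0$ there (no limiting argument), it avoids the polydisc-algebra evaluation step entirely, and it yields a sharp quantitative characterization of $\|p(A)\|\le M$ rather than just the inequality. What the paper's route buys is brevity and a clean conceptual reduction --- all the nilpotents collapse onto a single contraction --- at the price of the two technical patches above. Two degenerate cases deserve one sentence in your write-up: if $w=0$ then $(\star)$ reduces to $|p(\lambda)|\le M$ (immediate since $\lambda\in\ol{\bD}^n$, and covering the case $h$ constant in Schwarz--Pick), and if $\|p\|_\infty=0$ or all $N_i=0$ the claim is trivial; neither affects correctness.
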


\begin{proof}
We first suppose that each $A_i$ is a nilpotent of order 2.
By Lemma~\ref{L:2nilpotent}, we may suppose that there is a unit vector $f$
and vectors $v_i \in (\bC f)^\perp$ of norm at most 1 so that $A_i = v_i f^*$.
Since $A_iA_j = 0$ for all $1 \le i,j \le n$, we need concern ourselves only
with the linear part of a polynomial.

Consider a polynomial
\[ p(z) = c + \sum_{i=1}^n a_i z_i + q(z) \]
where $q(z)$ consists of higher order terms.
Without loss of generality, we may suppose that some $a_i \ne 0$.
Let $\lambda_i$ be scalars of modulus 1 so that $a_i = \lambda_i |a_i|$.
Set 
\[ B = \Big( \sum_{i=1}^n |a_i| \Big)^{-1} \sum_{i=1}^n a_i A_i . \]
Clearly $\|B\| \le 1$.  Then
\begin{align*}
 p(A_1,\dots,A_n) &= cI + \sum_{i=1}^n a_i A_i 
 = cI + \sum_{i=1}^n |a_i| B \\&
 =  p( \ol{\lambda}_1 B, \dots, \ol{\lambda}_n B) = q(B) ,
\end{align*}
where $q(x) = p(\ol{\lambda}_1 x, \dots, \ol{\lambda}_n x)$.
Hence by von Neumann's inequality, 
\[
  \| p(A_1,\dots,A_n) \| = \|q(B)\| \le \|q\|_\infty \le \|p\|_\infty .
\]

Now suppose that $A_i = \lambda_i I + N_i$ where $N_i^2=0$.
If $N_i\ne 0$, the fact that $\|A_i\|\le1$ implies that $|\lambda_i|<1$.
If some $A_i = \lambda_i I$ with $|\lambda_i|=1$, replace it with
$A'_i = (1-\ep)\lambda_i I$ for $\ep>0$ small. If we establish the 
inequality for this new $n$-tuple, we recover the case we desire
by letting $\ep$ tend to 0.

Define M\"obius maps 
\[ b_i(z) = \frac{z-\lambda_i}{1-\ol{\lambda_i}z} \qfor 1 \le i \le n.\]
Then
\[ B_i :=b_i(A_i) = (1-|\lambda_i|^2)^{-1} N_i \]
are commuting  nilpotents of order 2, and  $A_i = b_i^{-1}(B_i)$. 
Moreover, $B_i$ are contractions by the one variable von Neumann
inequality (or by direct computation).
If $p$ is a polynomial, then 
\[ q(z) = p(b_1^{-1}(z_1),\dots,b_n^{-1}(z_n)) \]
is a function in the polydisc algebra $\rA(\bD^n)$ of the same norm
as $p$ because each $b_i^{-1}$ takes the circle $\bT$ onto itself.
Hence 
\[ \| p(A_1,\dots,A_n) \| = \| q(B_1,\dots,B_n) \| \le \|q\|_\infty = \|p\|_\infty .\]
Therefore the scalar von Neumann inequality is satisfied.
\end{proof}

If  the matrix von Neumann inequality holds for a 4-tuple of matrices $A_1$,\dots,$A_4$,
then the canonical map from $\rA(\bD^4)$ into $\A = \Alg(\{A_i\})$ taking
$z_i$ to $A_i$ for $1 \le i \le 4$ is completely contractive. 
Therefore Arveson's Dilation Theorem \cite{Arv1} shows that the 4-tuple
does dilate to commuting unitaries.  Hence the restriction to the common invariant 
subspace generated by the original space $\H$ yields a coextension to
commuting isometries. Hence there is a matrix polynomial which shows
that this inequality fails for our example in Theorem~\ref{T:no_dilation}. 
We will exhibit one.

Recall the notation of Theorem~\ref{T:no_dilation}. 
Apply the Gram-Schmidt process to the vectors $u_1$ and $u_2$ to get 
orthogonal unit vectors $f_1$ and $f_2$, where
\[ 
 u_1 = \begin{bmatrix}1\\0\\c_1\\c_2\end{bmatrix} ,\quad
 u_2 = \begin{bmatrix}0\\1\\s_1\\is_2\end{bmatrix} ,\quad
 f_1 = \begin{bmatrix}\alpha_1\\\alpha_2\\\alpha_3\\\alpha_4\end{bmatrix} ,\qand
 f_2 = \begin{bmatrix}\beta_1\\\beta_2\\\beta_3\\\beta_4\end{bmatrix} .
\]

\begin{prop} \label{P:fail_vN}
Let 
\[
 p(z) = \begin{bmatrix}
 \bar\alpha_1z_1 + \bar\alpha_2z_2 + \bar\alpha_3z_3 + \bar\alpha_4z_4\\
 \bar\beta_1z_1 + \bar\beta_2z_2 + \bar\beta_3z_3 + \bar\beta_4z_4
 \end{bmatrix} .
\]
Then 
\[ \|p\|_\infty = \sup_{|z_i|=1}\|p(z)\| < 2 = \|p(A_1,A_2,A_3,A_4)\|.\]
So the matrix von Neumann inequality fails for $(A_1, A_2, A_3, A_4)$.
\end{prop}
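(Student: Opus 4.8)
The plan is to compute both extreme quantities and show the middle term, the polydisc norm, is strictly smaller. The value $\|p(A_1,\dots,A_4)\| = 2$ is the routine side. Writing each $A_k = v_kf^*$ with $v_1=e_1$, $v_2=e_2$, $v_3 = c_1e_1+s_1e_2$, $v_4 = c_2e_1+is_2e_2$, substitution gives a rank-one block operator $p(A) = \begin{bmatrix} w_1\\ w_2\end{bmatrix}f^*$, where $w_j = \sum_k \ol{(f_j)_k}\,v_k$. The key bookkeeping observation is that the $k$-th coordinates of $u_1$ and $u_2$ are exactly the $e_1$- and $e_2$-coefficients of $v_k$, so, viewing $f_j$ as a vector in $\bC^4$, we get $w_j = \ip{u_1,f_j}e_1 + \ip{u_2,f_j}e_2$. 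Hence $\|p(A)\|^2 = \|w_1\|^2+\|w_2\|^2 = \sum_{j,k=1,2}|\ip{u_k,f_j}|^2$, and since $f_1,f_2$ is an orthonormal basis of $\spn\{u_1,u_2\}$ this double sum is $\|u_1\|^2+\|u_2\|^2 = (1+c_1^2+c_2^2)+(1+s_1^2+s_2^2) = 4$, giving $\|p(A)\| = 2$.

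For the polydisc norm I would reinterpret it geometrically. For $z\in\bT^4$, the column $p(z)$ has entries $\ip{z,f_1}$ and $\ip{z,f_2}$, so $\|p(z)\|^2 = |\ip{z,f_1}|^2+|\ip{z,f_2}|^2 = \|Pz\|^2$, where $P$ is the orthogonal projection of $\bC^4$ onto $V := \spn\{u_1,u_2\} = \spn\{f_1,f_2\}$. Every $z\in\bT^4$ has $\|z\|^2 = 4$, and $\|Pz\|\le\|z\|$ with equality precisely when $z\in V$. So the whole statement reduces to the claim that $V$ contains no \emph{unimodular} vector, that is, no vector all of whose four coordinates have modulus $1$.

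To establish this, I would write a general element of $V$ as $z = \mu u_1+\nu u_2 = (\mu,\ \nu,\ \mu c_1+\nu s_1,\ \mu c_2+i\nu s_2)$. Unimodularity of the first two coordinates forces $|\mu|=|\nu|=1$, and rescaling by a unimodular scalar lets me take $\mu=1$. The third coordinate then gives $|c_1+\nu s_1|^2 = 1 + 2c_1s_1\,\re\nu$, which equals $1$ only if $\re\nu = 0$; the fourth gives $|c_2+i\nu s_2|^2 = 1 + 2c_2s_2\,\re(i\nu)$, which equals $1$ only if $\re(i\nu)=0$, i.e.\ $\nu\in\bR$. Since $c_i,s_i>0$, these two conditions are incompatible, so no unimodular vector lies in $V$.

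Finally, since $\bT^4$ is compact and $z\mapsto\|Pz\|^2$ is continuous, the supremum $\|p\|_\infty^2 = \sup_{z\in\bT^4}\|Pz\|^2$ is attained; being everywhere strictly below $4$, it yields $\|p\|_\infty < 2 = \|p(A)\|$. The only genuine content is the incompatibility of the third and fourth coordinate conditions, and I expect that to be where the real work sits. That incompatibility is precisely the role of the factor $i$ distinguishing $A_4$ from a real combination, matching the earlier remark that this example cannot be built from real matrices.
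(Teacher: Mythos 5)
Your proof is correct and is essentially the paper's own argument: both interpret $\|p(z)\|^2=|\ip{z,f_1}|^2+|\ip{z,f_2}|^2$ as the squared norm of the projection onto $\spn\{f_1,f_2\}=\spn\{u_1,u_2\}$, reduce $\|p\|_\infty<2$ via compactness to the claim that this subspace contains no vector with all four coordinates unimodular (your incompatible conditions $\re\nu=0$ and $\re(i\nu)=0$ are exactly the paper's $\re(\bar ab)=0=\re(i\bar ab)$, hence $\bar ab=0$), and both compute $\|p(A_1,A_2,A_3,A_4)\|^2=\sum_{i,j=1}^2|\ip{u_i,f_j}|^2=\|u_1\|^2+\|u_2\|^2=4$ from the single nonzero column of $p(A)$, your rank-one factorization $p(A)=\begin{sbmatrix}w_1\\w_2\end{sbmatrix}f^*$ being the same bookkeeping as the paper's ``$2,1$ and $3,1$ entries.'' As a minor aside, your $\|z\|^2=4$ silently corrects a typo in the paper's display, where $\le\|z\|^2=2$ should read $\le\|z\|^2=4$ (equivalently $\|p(z)\|\le\|z\|=2$).
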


\begin{proof}
Let $z=(z_1,z_2,z_3,z_4)^t$ with $|z_i| = 1$. 
Since $f_1$ and $f_2$ are orthonormal,  
\[ \|p(z)\|^2 = |\ip{z,f_1}|^2+|\ip{z,f_2}|^2 \le \|z\|^2 = 2 .\]
This inequality is strict unless $z \in \spn\{f_1,f_2\} = \spn\{u_1,u_2\}$. 
By compactness, the norm of $p$ is exactly 2 only if this value is obtained.
However we claim that no $z$ with $|z_i|=1$ lies in this subspace.
Indeed, suppose that
\[
 z = \begin{bmatrix}z_1\\z_2\\z_3\\z_4\end{bmatrix} =
 a\begin{bmatrix}1\\0\\c_1\\c_2\end{bmatrix} +
 b\begin{bmatrix}0\\1\\s_1\\is_2\end{bmatrix} =
 \begin{bmatrix}a\\b\\ac_1+bs_1\\ac_2+ibs_2\end{bmatrix}
\]
We require
\[ 1 = |a| = |b| = |ac_1+bs_1| = |ac_2+ibs_2| .\]
Arguing as in the proof of Theorem~\ref{T:no_dilation}, we find that
$\re (\bar a b) = 0 = \re(i\bar a b)$. Thus $\bar a b = 0$, contradicting $|a|=|b|=1$.
So $\|p\|<2$.

Now we compute $\|p(A_1,A_2,A_3,A_4)\|$. 
Clearly it suffices to consider the $2,1$ and $3,1$ entries. That is
\begin{align*}
 p\big( \begin{bmatrix}1\\0\end{bmatrix}, 
 \begin{bmatrix}0\\1\end{bmatrix},\begin{bmatrix}c_1\\s_1\end{bmatrix},
 \begin{bmatrix}c_2\\is_2\end{bmatrix} \big) &=
 \begin{bmatrix}\bar\alpha_1 +  \bar\alpha_3c_1 + \bar\alpha_4c_2\\
 \bar\alpha_2 + \bar\alpha_3s_1 + \bar\alpha_4is_2\\
 \bar\beta_1 + \bar\beta_3c_1 + \bar\beta_4c_2\\
 \bar\beta_2 + \bar\beta_3s_1 + \bar\beta_4 is_2\end{bmatrix}  =
 \begin{bmatrix}\ip{u_1,f_1}\\ \ip{u_2,f_1}\\ \ip{u_1,f_2}\\ \ip{u_2,f_2} \end{bmatrix}
\end{align*}
Therefore
\[ 
 \|p(A_1,A_2,A_3,A_4)\|^2 = \sum_{i=1}^2 \sum_{j=1}^2 |\ip{u_i,f_j}|^2 = 
 \sum_{i=1}^2 \|u_i\|^2 = 4. \qedhere
\]
\end{proof}

\section{Dilating Three $3\times3$ Nilpotents of Order 2}

The purpose of this section is to prove a positive dilation result
for certain triples of $3\times3$ commuting contractions.
First we need a couple of lemmas.

\begin{lem} \label{L:scale}
Suppose that three commuting contractions $A_1$, $A_2$ and $A_3$
have coextensions $S_1$, $S_2$ and $S_3$ which are commuting isometries.
Then if $|a_i| \le 1$ for $i=1,2,3$, the operators $a_iA_i$ also 
have coextensions which are commuting isometries.
\end{lem}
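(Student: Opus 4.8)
The plan is to reduce to scaling a single coordinate by a nonnegative real factor and then to carry out that one-variable scaling by an explicit ampliation. First I would peel off the unimodular phases: writing $a_i = r_i\omega_i$ with $r_i = |a_i|\in[0,1]$ and $|\omega_i| = 1$, I would replace the given commuting isometries $S_1,S_2,S_3$ by $\omega_1 S_1,\omega_2 S_2,\omega_3 S_3$, which are again commuting isometries and now coextend $\omega_i A_i$. It therefore suffices to scale each coordinate down by a real factor $r\in[0,1]$, and since the coordinates can be treated one at a time, the whole problem reduces to the following: given commuting isometries $S_1,S_2,S_3$ on $\K$ coextending $A_1,A_2,A_3$ and $r\in[0,1]$, produce commuting isometric coextensions of $rA_1, A_2, A_3$.

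To do this I would ampliate to $\K\otimes\ltwo$, regard it as $\bigoplus_{k\ge0}\K$ with $\H$ embedded as $\H\otimes e_0$, and set $T_2 = S_2\otimes I$ and $T_3 = S_3\otimes I$, which plainly remain commuting isometries coextending $A_2, A_3$ on $\H\otimes e_0$. For the first coordinate I would look for $T_1$ of the form $S_1\otimes\Lambda + I\otimes M$, where $\Lambda$ is diagonal and $M$ is a weighted forward shift on $\ltwo$. The decisive feature of this ansatz is that both summands commute with $S_j\otimes I$ (using $S_1 S_j = S_j S_1$), so $T_1$ automatically commutes with $T_2$ and $T_3$ no matter how the weights are chosen; commutativity is thus built in for free. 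I would then take $\Lambda = \operatorname{diag}(r,0,0,\dots)$ so that the compression of $T_1$ to $\H\otimes e_0$ is exactly $rA_1$ and $\H\otimes e_0$ stays co-invariant.

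The one real computation is choosing the shift weights so that $T_1$ is isometric, and this is the only place where anything can go wrong. Expanding $T_1^*T_1$ and using $S_1^*S_1 = I$, the diagonal part gives $|\Lambda|^2 + M^*M = I$, while the cross terms carry the non-scalar factors $S_1^*$ and $S_1$ and so must be forced to vanish. Writing $M$ with weights $(\mu_0,\mu_1,\dots)$, vanishing of the cross terms reads $\lambda_{k+1}\mu_k = 0$, and combined with $|\lambda_k|^2 + |\mu_k|^2 = 1$ and $\lambda_0 = r$ this propagates to the unique honest choice $\mu_0 = \sqrt{1-r^2}$ and $\lambda_k = 0,\ \mu_k = 1$ for $k\ge1$. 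With these weights $T_1$ is a genuine isometry, so $T_1,T_2,T_3$ are the desired commuting isometric coextensions. Iterating the construction three times, after the initial phase reduction, yields coextensions of $a_1A_1, a_2A_2, a_3A_3$. The main obstacle is precisely this isometry check: cancelling the cross terms while keeping the leading weight equal to $r$, with the tensor ansatz chosen so that commutativity never interferes.

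Finally, I would record a much shorter route using the machinery recalled in the introduction. By Arveson's theorem the hypothesis is equivalent to the map $z_i\mapsto A_i$ being completely contractive on $\rA(\bD^3)$; the substitution $z_i\mapsto a_iz_i$ is a completely contractive unital endomorphism of $\rA(\bD^3)$ because $(a_1z_1,a_2z_2,a_3z_3)$ carries $\ol{\bD}^3$ into itself; and the composite $z_i\mapsto a_iA_i$ is therefore completely contractive, so the scaled triple dilates. I would present the explicit construction as the primary argument, since it is self-contained, and mention this composition argument as the quick alternative.
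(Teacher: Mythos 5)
Your proposal is correct and follows essentially the same route as the paper: scale one coordinate at a time on the ampliation $\K\otimes\ltwo$, building an explicit shift-type isometric coextension of $rA_1$ while the other coordinates become $S_j\otimes I$, and iterate three times. The only cosmetic differences are that the paper's inflation is $S_1\otimes X$ (with $S_1$ also in the defect and shift entries, which handles complex $a_1$ directly and makes your phase-peeling step unnecessary) whereas you use $S_1\otimes\Lambda + I\otimes M$, and your concluding Arveson composition argument is a valid extra not in the paper.
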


\begin{proof}
Observe that $a_iA_i$ coextend to commuting contractions $a_iS_i$.
If $|a_1|<1$, let $d_i = (1-|a_i|^2)^{1/2}$ and coextend $a_1S_1$ to
\[
 T_1 = \begin{bmatrix} 
 a_1S_1 & 0 & 0 & 0 & \dots\\
 d_1S_1 & 0 & 0 & 0 & \dots\\
 0 & S_1 & 0 & 0 & \dots\\
 0 & 0 & S_1 & 0 & \dots\\
 \vdots&\vdots&\ddots&\ddots&\ddots
\end{bmatrix}
\]
and simultaneously for $i=2,3$, coextend $a_iS_i$ to 
\[
 a_i T_i = a_i S_i \otimes I = 
 \begin{bmatrix} 
 a_1S_i & 0 & 0 & 0 & \dots\\
 0 & a_1S_i & 0 & 0 & \dots\\
 0 & 0 & a_1S_i & 0 & \dots\\
 0 & 0 & 0 & a_1S_i & \dots\\
 \vdots&\vdots&\ddots&\ddots&\ddots
 \end{bmatrix}
\]
It is easy to see that these contractions commute, and $T_i$ are isometries.

Now dilate $a_2T_2$ to an isometry, and dilate $T_1$ and $T_3$ to commuting
isometries in the same manner. 
Finally, repeat a third time to dilate the third term to an isometry.
\end{proof}

\begin{lem} \label{L:3vectors}
Given three unit vectors $v_1$, $v_2$, $v_3$ in $\bC^2$,
there exist three commuting unitaries $U_1$, $U_2$, $U_3$ in $\fM_2$ such that
\[ U_i v_j = U_j v_i \qfor 1 \le i,j \le 3. \]
\end{lem}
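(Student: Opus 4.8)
The plan is to build $U_1,U_2,U_3$ that are simultaneously diagonal in a common orthonormal basis $\{g_1,g_2\}$ of $\bC^2$. Any matrices of this form automatically commute, and they are unitary as soon as all their diagonal entries have modulus $1$, so no separate commutativity argument is needed. Writing $v_j = a_j g_1 + b_j g_2$ with $a_j=\ip{v_j,g_1}$, $b_j=\ip{v_j,g_2}$, and $U_i = \lambda_i g_1g_1^* + \mu_i g_2 g_2^*$, a one-line computation gives $U_i v_j = \lambda_i a_j\, g_1 + \mu_i b_j\, g_2$. Hence the desired identities $U_i v_j = U_j v_i$ are equivalent to the scalar conditions
\[ \lambda_i a_j = \lambda_j a_i \qand \mu_i b_j = \mu_j b_i \qforal 1\le i,j\le 3. \]

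First I would determine when these are solvable with $|\lambda_i|=|\mu_i|=1$. Since the $\lambda_i$ are nonzero, the first family forces $(a_1,a_2,a_3)$ to be a scalar multiple of the unimodular vector $(\lambda_1,\lambda_2,\lambda_3)$; such a solution exists exactly when $|a_1|=|a_2|=|a_3|$, and one may then take $\lambda_j = a_j/|a_j|$ (and $\lambda_j=1$ in the degenerate case $a_j=0$). Because $|a_j|^2+|b_j|^2 = \|v_j\|^2 = 1$, equality of the $|a_j|$ automatically forces equality of the $|b_j|$, so the $\mu_j$ are chosen in the same fashion. Thus the whole lemma reduces to a single geometric statement: there is a unit vector $g_1\in\bC^2$ for which $|\ip{v_j,g_1}|$ is independent of $j$.

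To produce such a $g_1$ I would pass to the Bloch sphere. Each rank-one projection $v_jv_j^*$ and the projection $g_1g_1^*$ can be written as $\tfrac12(I+\hat n_j\cdot\vec\sigma)$ and $\tfrac12(I+\hat m\cdot\vec\sigma)$ for unit vectors $\hat n_j,\hat m\in\bR^3$ (here $\vec\sigma$ denotes the Pauli matrices), and then $|\ip{v_j,g_1}|^2 = \operatorname{tr}\!\big(v_jv_j^*\,g_1g_1^*\big) = \tfrac12\big(1+\hat m\cdot\hat n_j\big)$. Hence $|\ip{v_j,g_1}|$ is constant in $j$ precisely when $\hat m$ is orthogonal to both $\hat n_1-\hat n_2$ and $\hat n_1-\hat n_3$. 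These are just two vectors in $\bR^3$, so their orthogonal complement contains a unit vector $\hat m$; and every unit $\hat m$ is realized as the Bloch vector of some unit $g_1$. This furnishes the required $g_1$ and completes the construction.

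The computations are routine, so the only genuine content is the existence of this equalizing vector $g_1$, and that is where the restriction to three vectors in $\bR^3$ is essential. Equalizing $|\ip{v_j,g_1}|$ over $n$ vectors amounts to asking $\hat m$ to be orthogonal to the $n-1$ differences $\hat n_1-\hat n_j$; for $n\le 3$ this is at most two constraints in $\bR^3$ and is always solvable by a dimension count, whereas for larger $n$ one should not expect a solution in general. I therefore expect this orthogonality/dimension step to be the crux of the argument, with everything else being bookkeeping.
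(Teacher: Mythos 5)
Your proof is correct, and it takes a genuinely different route from the paper's. The paper argues by bare-hands construction: it normalizes $v_1=\begin{sbmatrix}0\\1\end{sbmatrix}$, sets $U_1=I_2$, writes $v_3=av_1+bv_2$ with $a,b$ made real by rescaling $v_2,v_3$ by unimodular scalars, and defines $U_2,U_3$ as the determinant-one unitaries $\begin{sbmatrix}\bar\beta&\alpha\\-\bar\alpha&\beta\end{sbmatrix}$ whose second columns are $v_2,v_3$; a direct computation then gives the single linear identity $U_3=aU_1+bU_2$, which delivers commutativity and the one nontrivial relation $U_3v_2=U_2v_3$ in one stroke (the case $v_2\in\bC v_1$ is treated separately). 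You instead seek $U_i$ simultaneously diagonal in a common orthonormal basis $\{g_1,g_2\}$, correctly reduce $U_iv_j=U_jv_i$ to the scalar systems $\lambda_ia_j=\lambda_ja_i$ and $\mu_ib_j=\mu_jb_i$, observe that these are solvable with unimodular phases exactly when the $|a_j|$ (equivalently, by normalization, the $|b_j|$) coincide, and then produce the equalizing vector $g_1$ via the Bloch-sphere correspondence: your trace identity $|\ip{v_j,g_1}|^2=\tfrac12(1+\hat m\cdot\hat n_j)$ is right, and requiring $\hat m\perp(\hat n_1-\hat n_2)$ and $\hat m\perp(\hat n_1-\hat n_3)$ is only two linear constraints in $\bR^3$, so a unit solution always exists; your degenerate case is also consistent, since equal moduli force the $a_j$ to vanish all together or not at all. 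One point worth adding: your diagonal ansatz is actually lossless, because commuting unitaries in $\fM_2$ are normal and hence simultaneously diagonalizable, so your equal-modulus criterion is \emph{equivalent} to the lemma, not merely sufficient; this makes your closing observation precise and explains structurally why three vectors always work while four generically fail --- exactly the phenomenon exploited in Theorem~\ref{T:no_dilation} and quantified in the remark following it, where dilation of a family $(c_je_1+s_je_2)f^*$ requires the $c_j\bar s_j$ to share a common argument. What each approach buys: yours isolates the true obstruction and yields an exact solvability criterion for $n$ vectors; the paper's stays within elementary $2\times2$ algebra, produces completely explicit matrices, and needs neither the spectral theorem nor the Bloch parametrization.
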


\begin{proof}
We may choose an orthonormal basis for $\bC^2$
in which $v_1 = \begin{sbmatrix}0\\1\end{sbmatrix}$.  We set $U_1=I_2$.
If $v_2 = e^{i\theta}v_1$, let $U_2=e^{i\theta}I_2$ and choose $U_3$ to be
any unitary matrix such that $U_3v_1=v_3$. This works.

Otherwise, $v_1$ and $v_2$ are linearly independent.
Write $v_3 = av_1+bv_2$.
For convenience, we may multiply $v_3$ and $v_2$ by scalars of modulus 1 
so that $a$ and $b$ are real. This does not affect the problem.
Write
\[
 v_2 = \begin{bmatrix}\alpha_2\\ \beta_2\end{bmatrix} ,\quad
 v_3 = \begin{bmatrix}\alpha_3\\ \beta_3\end{bmatrix} ,\quad
 U_2 = \begin{bmatrix}\phantom{-}\bar\beta_2 & \alpha_2 \\
            -\bar\alpha_2 &\beta_2 \end{bmatrix} \AND
 U_3 = \begin{bmatrix}\phantom{-}\bar\beta_3 &\alpha_3 \\
            -\bar\alpha_3 &\beta_3 \end{bmatrix} .
\]
A simple calculation shows that
\[ 
 a U_1 + b U_2 = 
 \begin{bmatrix}a + b\bar\beta_2 & b\alpha_2\\
 -b\bar\alpha_2 &  a + b\beta_2 \end{bmatrix} =
 \begin{bmatrix}\phantom{-}\bar\beta_3 &\alpha_3 \\
            -\bar\alpha_3 &\beta_3 \end{bmatrix} = U_3 .
\]
This shows that the unitary matrices commute.
By construction, 
\[ U_iv_1 = v_i = U_1 v_i \qfor i=2,3 .\]
Finally observe that 
\[ 
 U_3 v_2 = (a U_1 + b U_2)v_2 = av_2 + bU_2v_2 = U_2(av_1+bv_2) = U_2v_3 .
 \qedhere
\]
\end{proof}

\begin{thm} \label{T:nilpotent}
Suppose that $A_1$, $A_2$ and $A_3$ are three commuting $3\times3$
matrix contractions which are all of the form scalar plus nilpotent of order 2.
Then there exist commuting isometric coextensions $S_1$, $S_2$ and $S_3$
of $A_1$, $A_2$ and $A_3$.
\end{thm}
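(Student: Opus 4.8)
The plan is to reduce to the case of pure order-2 nilpotents and then build the coextension explicitly using Lemma~\ref{L:3vectors}. First I would handle the scalar-plus-nilpotent reduction exactly as in Proposition~\ref{P:vN}: by applying M\"obius maps $b_i(z) = (z-\lambda_i)/(1-\bar\lambda_i z)$, one replaces each $A_i = \lambda_i I + N_i$ by a commuting nilpotent $B_i = b_i(A_i)$, and since the inverse M\"obius maps preserve the circle, a commuting isometric coextension of the $B_i$ yields one for the $A_i$ by composing with $b_i^{-1}$. (One must first dispose of the degenerate case $|\lambda_i|=1$, which forces $N_i=0$, by the same $\varepsilon$-perturbation argument and a limiting step; Lemma~\ref{L:scale} may be needed to keep the resulting operators contractive.) Thus it suffices to coextend three commuting order-2 nilpotent contractions.

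By Lemma~\ref{L:2nilpotent}, after taking adjoints if necessary I may write $A_i = v_i f^*$ for a single unit vector $f \in \bC^3$ and vectors $v_i \in (\bC f)^\perp \cong \bC^2$ with $\|v_i\| \le 1$. Using Lemma~\ref{L:scale} I would further reduce to the case where each $v_i$ is a \emph{unit} vector, since scaling $A_i$ by $a_i = \|v_i\|$ only shrinks the norms. Now the three $v_i$ are unit vectors in the two-dimensional space $(\bC f)^\perp$, so Lemma~\ref{L:3vectors} supplies commuting unitaries $U_1, U_2, U_3$ acting on $(\bC f)^\perp$ satisfying the symmetry relation $U_i v_j = U_j v_i$ for all $i,j$.

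The heart of the construction is to use these $U_i$ to build isometric coextensions on the space $\K = \H \oplus \big(\bigoplus_{k\ge 1} (\bC f)^\perp\big)$, thought of as $\H$ followed by a one-sided shift with multiplicity the $U_i$ encode. Concretely, I would set $S_i$ to send $f \mapsto v_i$ (matching $A_i f = v_i$, since $A_i$ kills $(\bC f)^\perp$), send the first copy of $(\bC f)^\perp$ forward by $U_i$ into the next copy, and continue shifting by $U_i$ thereafter. Each $S_i$ is then an isometry (the shift part is isometric because the $U_i$ are unitary, and $\|v_i\|=1$), and it coextends $A_i$ because $P_\H S_i|_\H = v_i f^* = A_i$. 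The key point is that the $S_i$ commute: this is precisely where the relations $U_i v_j = U_j v_i$ and $U_iU_j = U_jU_i$ from Lemma~\ref{L:3vectors} are consumed --- the two orderings $S_iS_j$ and $S_jS_i$ must agree both on $f$ (where the symmetry relation $U_i v_j = U_j v_i$ matches the images) and on the shifted copies (where commutativity of the $U_i$ matches).

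The main obstacle I anticipate is verifying commutativity cleanly on $f$, since $S_iS_j f$ involves first mapping $f$ into the first shift copy via $v_j$ and then applying $S_i$, which acts there by $U_i$; getting $U_i v_j = U_j v_i$ to fall out exactly requires setting up the shift so that the ``first step'' $f \mapsto v_i$ lands in a copy on which the subsequent operator acts by $U_i$. A secondary subtlety is that Lemma~\ref{L:3vectors} is special to \emph{three} vectors in $\bC^2$ --- the analogous statement fails for four, which is consistent with Theorem~\ref{T:no_dilation}, and so this construction cannot extend to four matrices.
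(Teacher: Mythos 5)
Your proposal is correct and is essentially the paper's own proof: you use the identical reduction chain (M\"obius maps $b_i$ for the scalar part, Lemma~\ref{L:2nilpotent} to write $A_i = v_if^*$, Lemma~\ref{L:scale} to normalize the $v_i$ to unit vectors, Lemma~\ref{L:3vectors} for the commuting unitaries $U_i$), and your one-sided shift coextension on $\H \oplus \bigoplus_{k\ge1}(\bC f)^\perp$ is precisely the restriction of the paper's commuting unitaries $W_i = U \otimes U_i$ ($U$ the bilateral shift) to the smallest common invariant subspace containing $\H$, with the relation $U_iv_j = U_jv_i$ you need on $f$ being automatic from $U_iv_j = U_iU_jv_1 = U_jU_iv_1 = U_jv_i$. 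The one blemish is the degenerate case $|\lambda_i|=1$: your ``$\ep$-perturbation and limiting step'' is unjustified as stated (a norm limit of coextendable tuples is not obviously coextendable without invoking, say, Arveson's theorem), whereas the paper disposes of it trivially --- $|\lambda_i|=1$ forces $A_i = \lambda_i I$, which is already an isometry and coextends to $\lambda_i I$ on the enlarged space.
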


\begin{proof}
First assume that the $A_i$ are nilpotent of the form $A_i = v_i f^*$ for $i = 1,2,3$,
where the $v_i$ are \textit{unit vectors} in $(\bC f)^\perp\simeq \bC^2$.
Choose a basis for $\bC^2$ so that $v_1 = \begin{sbmatrix}0\\1\end{sbmatrix}$.  
Let $U_i$ be the commuting $2\times 2$ unitaries given by Lemma~\ref{L:3vectors}.
Then the construction yields the identities $U_iv_1 = v_i$ for $i=1,2,3$.
So the second column of each $U_i$ is $v_i$, say 
$U_i = \begin{bmatrix} \gamma_i&\alpha_i\\ \delta_i&\beta_i\end{bmatrix}$. 

Observe that if $U$ is the bilateral shift on $\ltwo$, then $W_i = U \otimes U_i$ 
are commuting unitaries of the form:
\[
 W_i = 
  \left[\begin{array}{ccc|cc|ccc}
  \ddots&\ddots&\ddots&\vdots&\vdots&\vdots&\vdots& \\
  \ddots&O&O&O&O&O&O&\dots \\ 
  \ddots&U_i&O&O&O&O&O&\dots \\ \hline
  \dots&O&U_i&O&O&O&O&\ldots\\
  \dots&O&O&U_i&O&O&O&\dots\\ \hline
  \dots&O&O&O&U_i&O&O&\ddots\\
  \dots&O&O&O&O&U_i&O&\ddots\\
  &\vdots&\vdots&\vdots&\vdots&\ddots&\ddots&\ddots
 \end{array}\right]
\]
on $\K = \K_- \oplus \bC^4 \oplus \K_+$. Here $O$ is a $2\times2$ zero matrix.
Decompose the central $4\times4$ block of $W_i$ as $\bC \oplus \H$:
\[
 \begin{bmatrix}O&O\\U_i&O\end{bmatrix} = 
 \left[\begin{array}{c|ccc}
 0&0&0&0\\ \hline 
 0&0&0&0\\
 \gamma_i&\alpha_i&0&0\\
 \delta_i&\beta_i&0&0
 \end{array}\right] =
 \begin{bmatrix}
 0&0_3^*\\
 u_i&A_i
 \end{bmatrix}
\]
The spaces $\K_+$ and $\H \oplus \K_+$ are invariant for each $W_i$.
Therefore $W_i$ are commuting unitary (power) dilations of the $A_i$.
Note that $W_i^*$ then form commuting unitary dilations of the adjoints, $A_i^*$; 
and $\H$ is the difference of invariant subspaces $\K_-\oplus\bC^4$ and 
$\K_-\oplus\bC$.

By Lemma~\ref{L:2nilpotent}, if $A_i$ are commuting $3\times3$ nilpotents of order 2,
then either they have the form $A_i = v_if^*$  or their adjoints do. \vspace{.2ex}
By Lemma~\ref{L:scale}, it suffices to dilate the normalized matrices
$\tilde A_i := A_i/\|A_i\|$. Hence we can assume that each $\|v_i\|=1$.
The argument above produces commuting unitary dilations.
The restriction of these unitaries to the smallest common invariant subspace
containing the original 3-dimensional space yields commuting isometric
coextensions of the $\tilde A_i$.

We reduce the general case to the nilpotent case as in the previous section.
Suppose that $A_i = \lambda_i I + N_i$ where $N_i^2=0$.
Then  \[ |\lambda_i| \le \|A_i\| \le 1 .  \]
Moreover, if $|\lambda_i|=1$, then $A_i = \lambda_i I$ is already an
isometry, and we coextend it to $\lambda_i I$ on the larger space.
When $|\lambda_i|<1$, define the M\"obius map 
\[ b_i(z) = \frac{z-\lambda_i}{1-\ol{\lambda_i}z}.\]
Then $b_i(A_i)$ are commuting nilpotents of order 2.
Dilate them to commuting unitaries $W_i$ as above. 
Then define $U_i = b_i^{-1}(W_i)$.
These are commuting unitaries dilating $A_i$.
\end{proof}


\end{document}